\newtheorem{theorem}{Theorem}[section]
\newtheorem{conjecture}[theorem]{Conjecture}
\newtheorem{remark}[theorem]{Remark}
\title{This is the title}
\begin{document}
\begin{center}
{\bf{   C*-ALGEBRAIC CASAS-ALVERO CONJECTURE}\\
K. MAHESH KRISHNA}  \\
Post Doctoral Fellow \\
Statistics and Mathematics Unit\\
Indian Statistical Institute, Bangalore Centre\\
Karnataka 560 059 India\\
Email: kmaheshak@gmail.com\\

Date: \today
\end{center}

\hrule
\vspace{0.5cm}
\textbf{Abstract}: Based on Casas-Alvero conjecture \textit{[J. Algebra, 2001]} we formulate the following conjecture.\\
\textbf{C*-algebraic Casas-Alvero Conjecture : 	Let $\mathcal{A}$ be a  commutative C*-algebra, $n\in \mathbb{N}$ and let $P(z)	\coloneqq (z-a_1)(z-a_2)\cdots (z-a_n)$ be a polynomial over $\mathcal{A}$ with $a_1, a_2, \dots, a_n \in \mathcal{A}$. If $P$ shares a common zero with each of its (first) $n-1$ derivatives, then it is $n^\text{th}$ power of a linear monic C*-algebraic polynomial.}\\
We show that C*-algebraic Casas-Alvero Conjecture holds for C*-algebraic polynomials of degree 2. \\
\textbf{Keywords}: C*-algebra, Casas-Alvero Conjecture.\\
\textbf{Mathematics Subject Classification (2020)}: 46L05, 30C10.\\
\hrule
\tableofcontents
\hrule
\section{Introduction}
In 2001, Prof. Eduardo Casas-Alvero made the following conjecture  \cite{CASASALVERO}. 
\begin{conjecture} \cite{CASASALVERO, DRAISMADEJONG, POLSTRA} \label{CASASALVERO}  \textbf{(Casas-Alvero Conjecture)
		If a complex monic polynomial of degree $n$ shares a common zero with each of its (first) $n-1$ derivatives, then it is $n^\text{th}$ power of a linear monic polynomial.}
\end{conjecture}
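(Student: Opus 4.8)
The plan is to treat the statement as a problem in elimination theory over $\mathbb{Z}$ and then attack it by reduction to positive characteristic, which is the only family of methods known to yield genuine progress. First I would normalize: both the hypothesis (sharing a zero with each derivative) and the conclusion (being an $n$-th power of a monic linear polynomial) are invariant under the affine substitutions $z \mapsto \lambda z + \mu$, so after centering I may assume the polynomial has the form $f(z) = z^n + c_2 z^{n-2} + \cdots + c_n$ with vanishing subleading coefficient, whereupon the desired conclusion is simply $f(z) = z^n$, i.e. $c_2 = \cdots = c_n = 0$. It is worth noting at the outset why the naive inductive approach fails: the common zero that $f$ shares with $f'$ need not be the one it shares with $f''$, so one cannot argue that a single point is a high-order root and peel off a factor. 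This is exactly what makes a degree-$n$ counterexample, if one existed, structurally hard to exclude by hand.

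Next I would encode the hypothesis algebraically. Since $f$ is monic and each derivative $f^{(i)}$ has nonzero leading coefficient $n(n-1)\cdots(n-i+1)$ in characteristic $0$, the statement that $f$ and $f^{(i)}$ share a zero is equivalent to the vanishing of the resultant $\operatorname{Res}(f, f^{(i)})$ for each $i = 1, \dots, n-1$. These $n-1$ resultants are polynomials in $c_2, \dots, c_n$ with integer coefficients, so they cut out a closed subscheme $W_n \subseteq \mathbb{A}^{n-1}_{\mathbb{Z}}$ defined over $\mathbb{Z}$. The conjecture is then precisely the assertion that, over $\mathbb{C}$, the only point of $W_n$ is the origin.

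The core of the argument I would pursue is a spreading-out principle: by Noetherianity and the constructibility of images of morphisms, a hypothetical nonzero $\mathbb{C}$-point of $W_n$ would persist to a nonzero $\overline{\mathbb{F}_p}$-point of $W_n$ for all but finitely many primes $p$. One then exploits the degeneration that occurs when $p \mid n$. When $n = p^k$ the top coefficient $n$ of $f'$ vanishes modulo $p$, so $f'$ drops degree in a controlled way; combining this with the Frobenius-compatible identities among the $f^{(i)}$ in characteristic $p$ forces any Casas--Alvero polynomial to equal $z^{p^k}$, contradicting the existence of a nonzero point. This is the mechanism that establishes the conjecture for $n = p^k$, and, with extra bookkeeping, for small multiples such as $n = 2p^k$.

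The step I expect to be the genuine obstacle --- and the reason the statement as worded is an open conjecture rather than a theorem --- is the passage to \emph{arbitrary} $n$. When $n$ has two or more distinct prime factors and no convenient prime-power structure, there is no single residue characteristic $p$ in which the degree-drop of $f'$ is strong enough to collapse $W_n \otimes \overline{\mathbb{F}_p}$ onto the origin, and the spurious components that $W_n$ may acquire in those characteristics are not controlled by any known elimination-theoretic or combinatorial estimate. I would therefore focus effort on the defining ideal of $W_n$ for composite $n$: either proving it is radical with a single associated point over $\mathbb{Z}[1/N]$ for a suitable integer $N$, or producing a valuation-theoretic obstruction uniform in $n$. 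Absent such an input, the reduction technique settles every prime-power case but leaves the general statement out of reach.
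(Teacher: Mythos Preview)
The statement you were asked to address is labeled in the paper as a \emph{Conjecture}, not a theorem; the paper offers no proof of it, only a brief survey of the known partial results (degrees $\le 7$ by computer algebra, degrees $p^k$ and $2p^k$ via reduction to positive characteristic, degree $12$, etc.). Your proposal correctly recognizes this status: you outline the standard normalization, the resultant encoding, and the spreading-out-to-characteristic-$p$ mechanism behind the prime-power results, and you explicitly identify the obstruction for general composite $n$ as the reason the statement remains open. So there is no discrepancy to diagnose --- both you and the paper treat the full statement as unproved.

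That said, it is worth being precise about what your write-up is and is not. It is not a proof, and you do not claim it to be one; it is a research plan together with an honest acknowledgment of the missing ingredient (control of $W_n \otimes \overline{\mathbb{F}_p}$ for $n$ with several prime factors). This matches the state of the literature. If your intent was merely to explain why the conjecture is believed and what is known, your account is accurate and more detailed than the paper's own discussion. If your intent was to supply a proof, then the genuine gap is exactly the one you name in your final paragraph: no argument currently rules out extraneous components of the resultant scheme over all residue characteristics simultaneously when $n$ is a generic composite integer.
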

Conjecture  \ref{CASASALVERO}  also appears as \textbf{Problem 30} in the list of 33 problems \textbf{Some open problems in low dimensional dynamical systems} by Gasull \cite{GASULL}.

In 2011, using Gauss-Lucas theorem,  Draisma and Jong proved that Conjecture  \ref{CASASALVERO}  holds for polynomials  of degree upto 4  \cite{DRAISMADEJONG}. In 2006, using MAPLE, Diaz–Toca and Gonzalez–Vega  verified Conjecture  \ref{CASASALVERO}  for all polynomials of degree upto 7  \cite{DIAZTOCALAUREANO}.  In 2007, Graf von Bothmer, Labs,  Schicho and van de Woestijne proved that Conjecture  \ref{CASASALVERO}  holds for polynomials  of degree of the form $p^k$ or $2p^k$, where $p$ is a prime and $k$ is a natural number \cite{GRAFVONBOTHMERLABSSCHICHOVANDEWOESTIJNE}. In 2014,  Yakubovich proved that Conjecture  \ref{CASASALVERO}  holds for polynomial with only real roots satisfying some conditions \cite{YAKUBOVICH2016, YAKUBOVICH2014}.  In 2014, Castryck, Laterveer and Ounaies proved that Conjecture  \ref{CASASALVERO}  holds for polynomials  of degree 12  \cite{CASTRYCKLATERVEEROUNAIES}. In 2020, Cima, Gasull and Manosas showed that two conjectures similar to Conjecture  \ref{CASASALVERO}  for smooth functions fail \cite{CIMAGASULMANOSAS}.  In this paper we formulate Conjecture \ref{CASASALVERO} for polynomials over   C*-algebras. We show that  conjecture holds for second degree polynomials over  C*-algebras.
\section{C*-algebraic Casas-Alvero conjecture}
	Let $\mathcal{A}$ be a   C*-algebra. For  $P(z)	\coloneqq (z-a_1)(z-a_2)\cdots (z-a_n)$ for all  $z\in \mathcal{A}$ with  $a_1, a_2, \dots, a_n \in \mathcal{A} $, we define 
\begin{align*}
	P'(z)\coloneqq \sum_{j=1}^{n}(z-a_1)\cdots \widehat{(z-a_j)}\cdots (z-a_n), \quad \forall z \in \mathcal{A}
\end{align*}
where the term with cap is missing.
We can merely formulate the C*-algebraic version of Casas-Alvero conjecture as follows.
\begin{conjecture}\label{CALGEBRAICCASASALVERO}\textbf{(C*-algebraic Casas-Alvero Conjecture)
		Let $\mathcal{A}$ be a  commutative C*-algebra, $n\in \mathbb{N}$ and let $P(z)	\coloneqq (z-a_1)(z-a_2)\cdots (z-a_n)$ be a polynomial over $\mathcal{A}$ with $a_1, a_2, \dots, a_n \in \mathcal{A}$. If $P$ shares a common zero with each of its (first) $n-1$ derivatives, then it is $n^\text{th}$ power of a linear monic C*-algebraic polynomial.}
\end{conjecture}
\begin{theorem}\label{CHOLDS}
	Conjecture \ref{CALGEBRAICCASASALVERO}	holds for C*-algebraic polynomials of degree 2.
\end{theorem}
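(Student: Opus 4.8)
The plan is to use the fact that a degree-$2$ polynomial is simple enough that the Casas--Alvero hypothesis collapses to a single nilpotency statement, and then to invoke that a commutative C*-algebra contains no nonzero square-zero element.

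Concretely, write $P(z)=(z-a_1)(z-a_2)$. Because $\mathcal{A}$ is commutative this expands to $P(z)=z^2-(a_1+a_2)z+a_1a_2$, and the derivative as defined above is $P'(z)=(z-a_2)+(z-a_1)=2z-(a_1+a_2)$. Suppose $b\in\mathcal{A}$ satisfies $P(b)=P'(b)=0$; for $n=2$ these are the only two conditions, since $P^{(n-1)}=P'$. From $P'(b)=0$ we obtain $a_1+a_2=2b$, i.e. $b-a_2=-(b-a_1)$. Substituting this into $P(b)=(b-a_1)(b-a_2)=0$ yields $-(b-a_1)^2=0$, that is, $u^2=0$ where $u:=b-a_1$. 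Note that nowhere is a division by $2$ performed, so it is irrelevant whether $\mathcal{A}$ is unital.

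The only place the C*-structure is needed is the claim that $u^2=0$ forces $u=0$. I would prove this directly from the axioms: since $\mathcal{A}$ is commutative, $u$ is normal, so $u^*u=uu^*$ and hence $(u^*u)^2=u^*(uu^*)u=u^*u^*uu=(u^*)^2u^2=0$; but $u^*u$ is self-adjoint, so the C*-identity applied twice gives $\|u^*u\|^2=\|(u^*u)^2\|=0$, whence $u^*u=0$ and $\|u\|^2=\|u^*u\|=0$, so $u=0$. Equivalently, one may invoke the Gelfand--Naimark theorem, identify $\mathcal{A}$ with $C_0(X)$, and argue pointwise. Thus $a_1=b$, and plugging back into $a_1+a_2=2b$ gives $a_2=b$ as well; therefore $P(z)=(z-b)^2$ on $\mathcal{A}$ and the conclusion holds with $c=b$.

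I do not anticipate a genuine obstacle for $n=2$: the argument is a one-line algebraic reduction plus the no-nilpotents observation. It is worth stressing, though, that commutativity is essential rather than cosmetic here — a noncommutative C*-algebra such as $M_2(\mathbb{C})$ does contain nonzero square-zero elements, so the implication $u^2=0\Rightarrow u=0$, and with it the whole statement, fails without commutativity; any attempt to push the method to higher $n$ will have to confront exactly this nilpotency phenomenon together with the genuinely arithmetic content of the classical conjecture.
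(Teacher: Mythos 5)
Your argument is correct and follows essentially the same route as the paper: both reduce the degree-$2$ hypothesis to the statement that $(b-a_1)^2=0$ (equivalently $(a_1-a_2)^2=0$) and then conclude from the absence of nonzero square-zero elements in a commutative C*-algebra. The only differences are cosmetic — you avoid the substitution $b=\tfrac{a_1+a_2}{2}$ and you prove the no-nilpotents step directly from the C*-identity rather than only citing Gelfand--Naimark, which makes the write-up slightly more self-contained but does not change the method.
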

\begin{proof}
Let $\mathcal{A}$ be a commutative C*-algebra and let $P(z)\coloneqq (z-a)(z-b)$, $a,b \in \mathcal{A}$.	Assume that there is an element  $c\in \mathcal{A}$ such that $P(c)=P'(c)=0$. Then 
	\begin{align*}
		&(c-a)(c-b)=0=2c-a-b \implies c=\frac{a+b}{2} \\
		&\implies \left(\frac{a+b}{2}-a\right)\left(\frac{a+b}{2}-b\right)=0 \implies (b-a)^2=0.
	\end{align*}
By Gelfand-Naimark theorem $a$ and $b$ can be regarded as a complex valued functions. Therefore $a-b=0$ which gives $a=b$. 
\end{proof}

\begin{remark}
	\begin{enumerate}[\upshape(i)]
	\item  \textbf{C*-algebraic Sendov conjecture} has been formulated in \cite{MAHESHKRISHNA}.	
 	\item \textbf{C*-algebraic Schoenberg conjecture} has been formulated in \cite{MAHESHKRISHNA2}.
 	\item \textbf{C*-algebraic Smale mean value conjecture and Dubinin-Sugawa dual mean value conjecture} have been formulated in \cite{MAHESHKRISHNA3}.
	\end{enumerate}
\end{remark}

 \bibliographystyle{plain}
 \bibliography{reference.bib}

\end{document}